\newcounter{alphthm}
\newtheorem{thm}{Theorem}[section]
\newtheorem{lem}[thm]{Lemma}
\newtheorem{cor}{Corollary}[section]
\theoremstyle{definition}
\newcommand{\be}{\begin{equation}}
\newcommand{\ee}{\end{equation}}
\newcommand{\ben}{\begin{enumerate}}
\newcommand{\een}{\end{enumerate}}
\title{Finsler Metrics with Bounded Cartan Torsions}
\author{A. Tayebi, H. Sadeghi and   E. Peyghan}
\begin{document}

\maketitle
\begin{abstract}
The norm of Cartan torsion plays an important role for studying of immersion  theory in Finsler geometry. Indeed,
Finsler manifold with unbounded Cartan torsion can not be isometrically imbedded into any Minkowski space. In this
paper, we find two subclasses of $(\alpha, \beta)$-metrics which have  bounded Cartan torsion.  Then, we give two
subclasses of $(\alpha, \beta)$-metrics whose bound on the Cartan torsions are independent of the norm of $\beta$.\\\\
{\bf {Keywords}}:    Cartan Torsion, $(\alpha, \beta)$-metric,  Randers metric.\footnote{ 2010 Mathematics subject
Classification: 53C60, 53C25.}
\end{abstract}

\section{Introduction}
One of fundamental problems in Finsler geometry is wether or not every Finsler manifold  can be isometrically
immersed into a Minkowski space.   The answer is affirmative for Riemannian manifolds \cite{N}. For Finsler
manifolds, the problem under some conditions was considered by   Burago-Ivanov, Gu  and Ingarden  \cite{BI}
\cite{Gu1}\cite{Gu2}\cite{In}.
In \cite{Sh3}, Shen proved that Finsler manifold with unbounded Cartan torsion can not be isometrically imbedded
into any Minkowski space.  Thus the norm of Cartan torsion plays a important role for studying of immersion
theory in Finsler geometry. For a Finsler manifold $(M, F)$, the second and third order derivatives of
${1\over 2} F_x^2$ at $y\in T_xM_0$ are fundamental form ${\bf g}_y$  and the Cartan torsion ${\bf C}_y$
on $T_xM$, respectively \cite{AS}. The Cartan torsion  was first introduced by Finsler \cite{F} and emphased
by Cartan \cite{C}.  For the Finsler metric $F$, one can defines the norm of  the Cartan torsion ${\bf C}$ as follows
\[
||{\bf C}||=\sup_{F(y)=1, v\neq 0}\frac{|{\bf C}_y(v, v, v)|}{[{\bf g}_y(v, v)]^{\frac{3}{2}}}.
\]
The bound for two dimensional Randers metrics $F=\alpha+\beta$ is verified by Lackey \cite{BCS}. Then, Shen proved
that the Cartan torsion of Randers metrics  on a manifold $M$ of dimension $n\geq 3$ is uniformly bounded by
$3/\sqrt{2}$ \cite{Sh1}. In \cite{MZ}, Mo-Zhou extend his result to a  general Finsler metrics,
$F=\frac{(\alpha+\beta)^m}{\alpha^{m-1}}$ ($m\in [1, 2]$). Recently, the first two authors find a
relation between the norm of Cartan and mean Cartan torsions of Finsler metrics defined by a Riemannian
metric and a 1-form on a manifold \cite{TS}. They prove that generalized Kropina metrics
$F=\frac{\alpha^{m+1}}{\beta^m}$, $(m\neq  0)$  have  bounded Cartan torsion.
It turns out that every C-reducible Finsler metric has bounded Cartan torsion.

All of above metrics are special Finsler metrics so- called $(\alpha,\beta)$-metrics. An $(\alpha, \beta)$-metric
is a Finsler metric on $M$ defined by $F:=\alpha\phi(s)$, where $s=\beta/\alpha$,  $\phi=\phi(s)$ is a $C^\infty$
function on the $(-b_0, b_0)$ with certain regularity, $\alpha=\sqrt{a_{ij}(x)y^iy^j}$ is a Riemannian metric and
$\beta=b_i(x)y^i$ is a 1-form on $M$.

In this paper, we consider a special $(\alpha,\beta)$-metric, called the generalized Randers metric
$F=\sqrt{c_1\alpha^2+2c_2\alpha\beta+c_3\beta^2}$ on a manifold $M$. By putting $c_1=c_2=c_3=1$,  we get
the Randers metric. First, we prove the following.

\begin{thm}\label{mainthm1}
Let $F=\sqrt{c_1\alpha^2+2c_2\alpha\beta+c_3\beta^2}$ be the generalized Randers metric on a manifold $M$,
where $\alpha=\sqrt{a_{ij}(x)y^iy^j}$ is a Riemannian metric, $\beta= b_i(x)y^i$ is an 1-form on $M$ and
$c_1, c_2, c_3$ are real constants such that $c_2^2<c_1c_3$ and $c_1^2>|c_2(3c_1+c_3)|$. Then $F$ has bounded Cartan torsion.
\end{thm}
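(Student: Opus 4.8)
The plan is to treat $F$ as the $(\alpha,\beta)$-metric $F=\alpha\phi(s)$ with $\phi(s)=\sqrt{c_1+2c_2s+c_3s^2}$ and $s=\beta/\alpha$, and to reduce the computation of $\|{\bf C}\|$ to an explicit one-variable estimate. First I would record the elementary derivative identities for this particular $\phi$: writing $P(s):=c_1+2c_2s+c_3s^2=\phi^2$, one has $\phi\phi'=c_2+c_3s$, hence $\phi'^2+\phi\phi''=c_3$ and $\phi\phi''=(c_1c_3-c_2^2)/\phi^2$, together with $\rho:=\phi(\phi-s\phi')=c_1+c_2s$. The hypothesis $c_2^2<c_1c_3$ is exactly what makes $P(s)>0$ for all $s$, so that $\phi$ is smooth and positive, and $\phi\phi''>0$; while $\rho>0$ on the admissible range of $s$ is the strong-convexity (regularity) requirement for $F$.

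Next I would write the Cartan tensor from the standard $(\alpha,\beta)$ formula $2{\bf C}_{ijk}=\partial{\bf g}_{ij}/\partial y^k$, where
$$ {\bf g}_{ij}=\rho\, h_{ij}+\phi^2\alpha_i\alpha_j+\phi\phi'(\alpha_im_j+\alpha_jm_i)+(\phi'^2+\phi\phi'')m_im_j, $$
with $\alpha_i=y_i/\alpha$, $h_{ij}=a_{ij}-\alpha_i\alpha_j$ the angular metric of $\alpha$, and $m_i=b_i-s\alpha_i$ (so $m_iy^i=0$ and $a^{ij}m_im_j=b^2-s^2$). Carrying out the differentiation, all terms containing $\alpha_i$ cancel, consistently with ${\bf C}_{ijk}y^k=0$, and the coefficient of $m_im_jm_k$ is proportional to $(\phi'^2+\phi\phi'')'$. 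The decisive simplification is that here $\phi'^2+\phi\phi''=c_3$ is constant, so this coefficient vanishes and only the term with coefficient $\rho'=c_2$ survives:
$$ {\bf C}_{ijk}=\frac{c_2}{2\alpha}\big(h_{ij}m_k+h_{jk}m_i+h_{ki}m_j\big). $$
Thus the Cartan tensor is of the reduced (C-reducible-type) form, and $c_2=0$ recovers the torsion-free Riemannian case.

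Then I would compute the norm. By $0$-homogeneity I may normalise $F(y)=1$, i.e. $\alpha=1/\phi$, and since ${\bf C}_y(v,v,v)$ depends only on the component of $v$ orthogonal to $y$, I choose the $y$-component of $v$ so as to minimise ${\bf g}_y(v,v)$; writing $W=h_{ij}v^iv^j$ and $V=m_iv^i$, this replaces ${\bf g}_y(v,v)$ by $\rho W+\phi\phi''V^2$, while ${\bf C}_y(v,v,v)=\tfrac{3c_2}{2\alpha}WV$. Hence
$$ \|{\bf C}\|=\sup\frac{\tfrac{3}{2}|c_2|\,\phi\,W|V|}{\big(\rho W+\phi\phi''V^2\big)^{3/2}}, $$
the supremum running over $|s|\le b=\|\beta\|_\alpha$ and $V^2\le(b^2-s^2)W$. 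Maximising in $V$ (the interior maximiser is $V^2=\rho W/(2\phi\phi'')$) and substituting the identities above collapses everything to the single-variable estimate
$$ \|{\bf C}\|\le\frac{|c_2|}{\sqrt{3(c_1c_3-c_2^2)}}\cdot\sup_{|s|\le b}\frac{c_1+2c_2s+c_3s^2}{c_1+c_2s}. $$

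The main obstacle is this last step: bounding the quotient $P(s)/(c_1+c_2s)$ on the admissible interval. Here $c_2^2<c_1c_3$ makes $\sqrt{c_1c_3-c_2^2}$ real and positive, and the second hypothesis $c_1^2>|c_2(3c_1+c_3)|$ is what I would use to control $P(s)/(c_1+c_2s)$ on the whole range $|s|\le b$ — in particular to keep $c_1+c_2s$ positive and bounded below there, so that the supremum is finite and $\|{\bf C}\|<\infty$. I would finish by examining the sign of the derivative of $P(s)/(c_1+c_2s)$, whose numerator is $c_2c_3s^2+2c_1c_3s+c_1c_2$, to locate the extremal $s$ and produce an explicit bound. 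I expect the only genuinely technical points to be the bookkeeping in the differentiation of ${\bf g}_{ij}$ and the verification that the extremal direction lies in the admissible range; the conceptual core is the vanishing of the $m_im_jm_k$ term forced by $\phi'^2+\phi\phi''=c_3$.
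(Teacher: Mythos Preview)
Your approach is correct and genuinely different from the paper's. The paper reduces to dimension two, constructs the Berwald frame $\{y,y^{\perp}\}$ via a Maple computation, obtains the explicit expression
\[
\xi(p,y)=\frac{3}{2}\left|\frac{c_2k\sin\theta\,\bigl(c_1+2c_2k\cos\theta+c_3k^2\cos^2\theta\bigr)^2}{f(k,\cos\theta)^{3/2}}\right|,
\]
with $f(k,x)=c_1^2+3c_1c_2kx+3c_2^2k^2x^2+c_2c_3k^3x^3+(c_1c_3-c_2^2)k^2$, and then uses the two hypotheses to show $f>0$ on $[0,1]\times[-1,1]$: the condition $c_2^2<c_1c_3$ forces $f(k_0,\cdot)$ to be monotone in $x$, and $c_1^2>|c_2(3c_1+c_3)|$ makes the endpoint values $f(k_0,\pm1)$ positive. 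Higher dimensions are handled by restricting to the plane $\mathrm{span}\{y,u\}$.

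Your route is more structural: the identity $(\phi'^2+\phi\phi'')'=0$ kills the $m_im_jm_k$ term outright and yields the closed form $C_{ijk}=\tfrac{c_2}{2\alpha}(h_{ij}m_k+h_{jk}m_i+h_{ki}m_j)$ directly, with no symbolic computation. Your bound $\tfrac{|c_2|}{\sqrt{3(c_1c_3-c_2^2)}}\cdot\tfrac{P(s)}{c_1+c_2s}$ is the \emph{unconstrained} maximum over $V$; it dominates the paper's exact two-dimensional value (indeed one checks that the paper's $f(k,x)$ equals $P(s)\bigl[\rho+\phi\phi''(b^2-s^2)\bigr]$ with $s=kx$, $b=k$, so the two formulas agree in that regime), and it realises the true norm in dimension $\ge 3$ whenever the interior critical point $V^2=\rho/(2\phi\phi'')$ lies in the admissible range. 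What your argument buys is transparency---one sees immediately that the Cartan tensor is of C-reducible shape precisely because $F^2$ is quadratic in $\beta$ for fixed $\alpha$---together with an explicit closed-form bound; what the paper's method buys is that it transfers mechanically to other $\phi$ (as in their Theorem~1.2) where no such algebraic simplification is available. One small remark on your final step: the positivity of $c_1+c_2s$ on $|s|\le b$ that you invoke is in fact already forced by the positive-definiteness of ${\bf g}_y$ in dimension $\ge 3$ (take $v$ with $V=0$, $W>0$), so your argument actually needs less than the full strength of the second hypothesis.
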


\bigskip

One of important $(\alpha,\beta)$-metrics is Berwald metric which was introduced by L. Berwald on unit ball
$U=B^n$ \cite{Be}. Berwald's metric can be expressed in the form $F=\lambda(\alpha+\beta)^2/\alpha$, where
\begin{equation}\label{Funk}
\alpha=\frac{\sqrt{(1-|x|^2)|y|^2-<x,y>^2}}{1-|x|^2}, \ \  \beta=\frac{<x,y>}{1-|x|^2}, \ \ \lambda=\frac{1}{1-|x|^2},
\ \ y\in T_xB^n\simeq \mathbb{R}^n\nonumber
\end{equation}
and $<,>$ and $| . |$ denote the Euclidean inner product and norm on $\mathbb{R}^n$, respectively. The Berwald's metric
 has been generalized by Shen to an arbitrary convex domain $U\subset R^n$ \cite{SH}. As an extension of the Berwald
 metric, we consider the metric $F=c_1\alpha+c_2\beta+c_3\beta^2/\alpha$ where
 $c_1, c_2, c_3\in \mathbb{R}$. Then we prove the following.

\begin{thm}\label{mainthm2}
Let $F=c_1\alpha+c_2\beta+c_3\beta^2/\alpha$ be an $(\alpha,\beta)$-metric on a manifold $M$, where
$\alpha:=\sqrt{a_{ij}(x)y^iy^j}$ is a Riemannian metric, $\beta:= b_i(x)y^i$ is an 1-form on $M$ and $c_1, c_2, c_3$
are real constants such that $c_2^2<4c_1c_3$ and $|c_1|>|c_3|$. Then $F$ has bounded Cartan torsion.
\end{thm}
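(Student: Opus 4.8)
The plan is to view $F=c_1\alpha+c_2\beta+c_3\beta^2/\alpha$ as the $(\alpha,\beta)$-metric $F=\alpha\phi(s)$, $s=\beta/\alpha$, with $\phi(s)=c_1+c_2s+c_3s^2$, and to exploit that $\phi$ is quadratic: $\phi'=c_2+2c_3s$, $\phi''=2c_3$ is constant, and $\phi'''\equiv0$. First I would extract the regularity meaning of the hypotheses. Since $c_2^2<4c_1c_3$ forces $c_1,c_3>0$ and hence $\phi>0$, while the positive-definiteness factor $\Delta:=\phi-s\phi'+(b^2-s^2)\phi''=c_1+2c_3b^2-3c_3s^2$ attains its minimum $c_1-c_3b^2$ over $|s|\le b$ at $|s|=b$, the metric is a genuine strongly convex Finsler metric precisely for $b:=\|\beta\|_\alpha<b_0:=\sqrt{c_1/c_3}$; the condition $|c_1|>|c_3|$ then ensures $b_0>1$. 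In particular $P:=\phi-s\phi'=c_1-c_3s^2>0$ throughout $|s|\le b<b_0$.

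Next I would reduce $\|{\bf C}\|$ to a scalar optimisation. Because ${\bf C}_y(y,\cdot,\cdot)=0$, writing an arbitrary $v$ $\g$-orthogonally as $v=\lambda y/F+w$ with $w\perp_{\g} y$ shows that only $w$ contributes and that the supremum occurs at $\lambda=0$, so $\|{\bf C}\|=\sup\{|{\bf C}_y(w,w,w)|/[\g_y(w,w)]^{3/2}:F(y)=1,\ w\perp_{\g} y\}$. Computing from $\g_{ij}=\frac{1}{2}(F^2)_{y^iy^j}$, I expect $F_{y^iy^j}=\alpha^{-1}[P\,h_{ij}+\phi''m_im_j]$, where $m_i=b_i-sy_i/\alpha$ and $h_{ij}$ is the angular metric of $\alpha$. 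Setting $M=m_iw^i$ and $H=h_{ij}w^iw^j$ this gives
\[ \g_y(w,w)=\phi\,[P H+\phi''M^2],\qquad {\bf C}_y(w,w,w)=\frac{1}{2\alpha}\big[3A\,MH+B\,M^3\big], \]
with $A=\phi'P-s\phi\phi''$ and $B=3\phi'\phi''+\phi\phi'''$ (here the orthogonality constraint $a_{ij}y^iw^j=-(\alpha\phi'/\phi)M$ has been used in the $y$-differentiation). The crucial simplification is $\phi'''=0$, which makes $B=3\phi'\phi''=6c_3(c_2+2c_3s)$ and $A=c_1c_2-3c_2c_3s^2-4c_3^2s^3$ polynomials of bounded degree.

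Then I would invoke the Cauchy--Schwarz bound $M^2\le(b^2-s^2)H$ (which follows from $\|m\|_\alpha^2=b^2-s^2$) together with the degree-zero homogeneity of the ratio in $w$ to normalise $H=1$, $M=t$ with $t^2\le b^2-s^2$, and use $F(y)=1$ (so $\alpha^{-1}=\phi$) to arrive at
\[ \|{\bf C}\|=\sup_{|s|\le b,\ t^2\le b^2-s^2}\frac{|3A\,t+B\,t^3|}{2\,\phi^{1/2}\,(P+\phi''t^2)^{3/2}}. \]
Over the compact disc $s^2+t^2\le b^2$ the numerator is a polynomial, hence bounded, and the denominator is bounded below by a positive constant: $\phi\ge\min_{|s|\le b}\phi>0$ and $P+\phi''t^2=(c_1-c_3s^2)+2c_3t^2\ge c_1-c_3b^2>0$ because $c_3>0$ and $b<b_0$. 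Hence the ratio is finite and $\|{\bf C}\|<\infty$.

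The main obstacle is this denominator estimate: one has to rule out any degeneration of $\phi$ or of $P+\phi''t^2$ anywhere on the admissible region, including along its boundary $t^2=b^2-s^2$ as $|s|\to b$. This is exactly where both hypotheses are indispensable -- $c_2^2<4c_1c_3$ keeps $\phi$ from vanishing, and $|c_1|>|c_3|$ keeps $P=c_1-c_3s^2$ uniformly positive for $|s|\le b<b_0$. A subtler point to treat carefully is that the apparent singular locus $t=0,\ |s|=b_0$ of the reduced fraction is never attained: the constraint $t^2\le b^2-s^2$ with $b<b_0$ confines $(s,t)$ strictly to the region where the denominator is positive, and there the numerator vanishes to matching order, so no blow-up occurs. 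Once these positivity facts are in hand, boundedness of $\|{\bf C}\|$ follows immediately from continuity on a compact set.
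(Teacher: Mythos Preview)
Your argument is correct and follows a genuinely different path from the paper's. The paper first restricts to $\dim M=2$, chooses an $\alpha$-orthonormal coframe and polar coordinates $y=r(\cos\theta,\sin\theta)$, builds the Berwald frame $\{y,y^{\perp}\}$ by Maple, and obtains an explicit expression for $\xi(p,y)=F\,|{\bf C}_y(y^{\perp},y^{\perp},y^{\perp})|/\g_y(y^{\perp},y^{\perp})^{3/2}$ as a rational function of $k=\|\beta\|_\alpha$ and $x=\cos\theta$. It then checks that the two factors in the denominator, $f_1(k,x)=3c_3k^2x^2-2c_3k^2-c_1$ and $f_2(k,x)=c_1+c_2kx+c_3k^2x^2$, never vanish on $[0,1]\times[-1,1]$ (this is where $c_2^2<4c_1c_3$ and $|c_1|>|c_3|$ enter), concludes continuity on a compact rectangle, and finally passes to higher dimensions by the $2$-plane $\mathrm{span}\{y,u\}$.

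You instead work in arbitrary dimension from the start, using the structural formulas for $(\alpha,\beta)$-metrics: with $F_{y^iy^j}=\alpha^{-1}(P\,h_{ij}+\phi''m_im_j)$ and the orthogonality $F_{y^i}w^i=0$, you reduce the supremum to a two-variable optimisation over $(s,t)$ on the closed disc $s^2+t^2\le b^2$, where $t=M/\sqrt{H}$ is constrained by Cauchy--Schwarz. The denominator $\phi^{1/2}(P+\phi''t^2)^{3/2}$ is uniformly positive because $\phi>0$ (discriminant $<0$) and $P+2c_3t^2\ge c_1-c_3b^2>0$ for $b<b_0=\sqrt{c_1/c_3}$; the numerator is polynomial. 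The payoff of your route is that it is computer-free, dimension-independent, and makes transparent that boundedness already holds for every admissible $b<b_0$, with $|c_1|>|c_3|$ serving only to guarantee $b_0>1$. The paper's route, by contrast, yields an explicit closed-form for $\|{\bf C}\|_p$ in two dimensions, which is useful if one wants sharp numerical bounds rather than mere finiteness. One small point worth tightening: your claim that $c_2^2<4c_1c_3$ ``forces $c_1,c_3>0$'' only gives $c_1c_3>0$; positivity then follows from $\phi(0)=c_1>0$, which is part of $F$ being a genuine Finsler metric.
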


For a vector $y \in T_xM_{0}$, the Riemann curvature $R_y: T_xM \rightarrow T_xM$ is defined by
$R_y(u):=R^i_{\ k}(y)u^k {\partial \over {\partial x^i}}$, where
\[
R^i_{\ k}(y)=2G^i_{x^k}- G^i_{x^jy^k}y^j+2G^j G^i_{y^j y^k}-G^i_{y^j}G^j_{y^k}.
\]
where $G^i=\frac{1}{4}g^{il}[(F^2)_{x^k y^l}y^k-(F^2)_{x^l}]$ are called the spray coefficients.
The family $R:=\{R_y\}_{y\in TM_0}$ is called the Riemann curvature. There are many Finsler metrics whose Riemann
curvature in every direction is quadratic.  A Finsler metric $F$ is said to be Berwald metric and R-quadratic metric
if $G^i$ and  $R_y$ is quadratic in $y\in T_xM$ at each point $x\in M$, respectively. In \cite{Sh1}, Shen proved
that every complete R-quadratic  manifold with bounded Cartan torsion is Landsbergian. He proved that a regular
$(\alpha, \beta)$-metric is Landsbergian if and only if it is Berwaldian \cite{Shen}. Thus, we can conclude the
following.
\begin{cor}
Let  $F_1=c_1\alpha+c_2\beta+c_3\beta^2/\alpha$,  $(c_2^2<4c_1c_3,|c_1|>|c_3|)$ and $F_2=\sqrt{c_1\alpha^2+
2c_2\alpha\beta+c_3\beta^2}$, $(c_2^2<c_1c_3$, $c_1^2>|c_2(3c_1+c_3)|)$ are  R-quadratic Finsler metrics on
a complete manifold $M$.  Then $F_1$ and $F_2$ are Berwaldian.
\end{cor}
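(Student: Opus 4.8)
The plan is to string together our two main theorems with the two cited results of Shen quoted just above the statement, so the corollary becomes a purely formal consequence of results already in hand. Observe first that both metrics are genuine $(\alpha,\beta)$-metrics: writing $s=\beta/\alpha$ we have $F_1=\alpha\,\phi_1(s)$ with $\phi_1(s)=c_1+c_2 s+c_3 s^2$, and $F_2=\alpha\,\phi_2(s)$ with $\phi_2(s)=\sqrt{c_1+2c_2 s+c_3 s^2}$. The first step is therefore to confirm that, under the stated constraints on $c_1,c_2,c_3$, each $\phi_i$ meets the regularity requirements of an $(\alpha,\beta)$-metric, namely $\phi_i>0$ together with the strong convexity inequality $\phi_i-s\phi_i'+(b^2-s^2)\phi_i''>0$ on the admissible range $|s|\le b<b_0$. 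The discriminant conditions $c_2^2<4c_1c_3$ and $c_2^2<c_1c_3$ guarantee that the underlying quadratics never vanish, so each $\phi_i$ is smooth and of constant sign; the remaining sign normalization (positivity, e.g. $c_1>0$) is exactly what the hypotheses of Theorems \ref{mainthm1} and \ref{mainthm2} already require in order that $F$ be a positive-definite Finsler metric.

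Granting regularity, the argument proceeds in three short steps. First, apply Theorem \ref{mainthm2} to $F_1$ (under $c_2^2<4c_1c_3$, $|c_1|>|c_3|$) and Theorem \ref{mainthm1} to $F_2$ (under $c_2^2<c_1c_3$, $c_1^2>|c_2(3c_1+c_3)|$); each metric then has bounded Cartan torsion. Second, invoke Shen's theorem that a complete R-quadratic Finsler manifold with bounded Cartan torsion is Landsbergian: since $M$ is assumed complete and each $F_i$ is assumed R-quadratic, we conclude that $F_1$ and $F_2$ are Landsberg metrics. Third, since each $F_i$ is a regular $(\alpha,\beta)$-metric, Shen's characterization that a regular $(\alpha,\beta)$-metric is Landsbergian if and only if it is Berwaldian upgrades the Landsberg conclusion to the desired Berwald conclusion.

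I do not expect any deep obstacle here, precisely because the statement is engineered to follow from the preceding machinery. The one point that genuinely requires care is the verification underlying the first paragraph: that the inequality hypotheses of Theorems \ref{mainthm1} and \ref{mainthm2} are compatible with, and in fact imply, the \emph{regularity} of the corresponding $(\alpha,\beta)$-metric, so that Shen's Landsberg-equals-Berwald theorem legitimately applies. In particular one must check that the convexity inequality holds on the whole interval $|s|\le b$ rather than merely at $s=0$; this reduces to a short sign analysis of $\phi_i,\phi_i',\phi_i''$ using the discriminant bounds and the $|c_1|>|c_3|$ type conditions, and it is the only place where the exact form of the constant constraints, rather than their mere existence, is used.
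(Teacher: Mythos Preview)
Your proposal is correct and follows exactly the route the paper intends: apply Theorems \ref{mainthm1} and \ref{mainthm2} to obtain bounded Cartan torsion, use Shen's theorem that a complete R-quadratic Finsler manifold with bounded Cartan torsion is Landsbergian, and then invoke Shen's result that a regular $(\alpha,\beta)$-metric is Landsbergian if and only if it is Berwaldian. The paper itself gives no separate proof of the corollary beyond the sentence ``Thus, we can conclude the following''; your additional remarks on verifying regularity of $\phi_1$ and $\phi_2$ are a reasonable elaboration that the paper leaves implicit.
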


\bigskip

By Theorems \ref{mainthm1}, it follows that if $c_2^2=c_1c_3$ then the norm of Cartan torsion of Finsler metric
$F=\sqrt{c_1\alpha^2+2c_2\alpha\beta+c_3\beta^2}$ is independent of $b:=||\beta||_\alpha=\sqrt{b_ib^i}$, where
$b^i=a^{ji}b_j$.  It is an interesting problem, to find a  subclass of $(\alpha, \beta)$-metrics whose bound on
the Cartan torsion is independent of $b$. In the final section, we give two subclasses of $(\alpha, \beta)$-metrics
whose bound on the Cartan torsions are independent of $b$.

\section{Preliminaries}\label{sectionP}
Let $M$ be an $n$-dimensional $ C^\infty$ manifold. Denote by $T_x M $ the tangent space at $x \in M$,  by
$TM=\cup _{x \in M} T_x M $ the tangent bundle of $M$, and by $TM_{0} = TM \setminus \{ 0 \}$ the slit tangent
bundle of $M$. A  Finsler metric on $M$ is a function $ F:TM \rightarrow [0,\infty)$ which has the following
properties:\\
(i) $F$ is $C^\infty$ on $TM_{0}$;\\
(ii) $F$ is positively 1-homogeneous on the fibers of the tangent bundle of $M$;\\
(iii) for each $y\in T_xM$, the following quadratic form $\textbf{g}_y$ on
$T_xM$  is positive definite,
\[
\textbf{g}_{y}(u,v):={1 \over 2} \frac{\partial^2}{\partial s \partial t}\left[  F^2 (y+su+tv)\right]|_{s,t=0}, \ \
u,v\in T_xM.
\]

Let  $x\in M$ and $F_x:=F|_{T_xM}$. We define ${\bf C}_y:T_xM\otimes T_xM\otimes
T_xM\rightarrow \mathbb{R}$ by
\[
{\bf C}_{y}(u,v,w):={1 \over 2} \frac{d}{dt}\left[\textbf{g}_{y+tw}(u,v)
\right]|_{t=0}, \ \ u,v,w\in T_xM.
\]
The family ${\bf C}:=\{{\bf C}_y\}_{y\in TM_0}$  is called the Cartan torsion. It is well known that ${\bf{C}}=0$
if and only if $F$ is Riemannian.

For $y\in T_x M_0$, define  mean Cartan torsion ${\bf I}_y$ by ${\bf I}_y(u):=I_i(y)u^i$, where $I_i:=g^{jk}C_{ijk}$.
By Deicke's  theorem, $F$ is Riemannian  if and only if ${\bf I}_y=0$  \cite{D}.

\bigskip

Let $(M, F)$ be a Finsler manifold. For  $y \in T_xM_0$, define the  Matsumoto torsion
${\bf M}_y:T_xM\otimes T_xM \otimes T_xM \rightarrow \mathbb{R}$ by ${\bf M}_y(u,v,w):=M_{ijk}(y)u^iv^jw^k$, where
\[
M_{ijk}:=C_{ijk} - {1\over n+1}  \{ I_i h_{jk} + I_j h_{ik} + I_k h_{ij} \},\label{Matsumoto}
\]
and $h_{ij}:=g_{ij}-\frac{1}{F^2}g_{ip}y^pg_{jq}y^q$ is the angular metric. A Finsler metric $F$ is said to be
C-reducible, if ${\bf M}_y=0$ \cite{Mat2}.  Matsumoto proves that every Randers metric satisfies ${\bf M}_y=0$.
Later on, Matsumoto-H\={o}j\={o} prove that the converse is true too.
\begin{lem}\label{MaHo}{\rm (\cite{MH})}
\emph{A Finsler metric $F$ on a manifold of dimension $n\geq 3$ is a Randers metric if and only if the Matsumoto
torsion vanish}.
\end{lem}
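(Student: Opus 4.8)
The statement has two implications. The forward one (Randers $\Rightarrow {\bf M}_y=0$) is a direct computation, while the converse (${\bf M}_y=0 \Rightarrow$ Randers), the substantive content due to Matsumoto and H\={o}j\={o}, is a pointwise rigidity theorem for Minkowski norms. For the forward direction I would start from the explicit fundamental tensor of $F=\alpha+\beta$: writing $\ell_i=y_i/\alpha$ with $y_i=a_{ij}y^j$,
\[
g_{ij}=\frac{F}{\alpha}\,(a_{ij}-\ell_i\ell_j)+(b_i+\ell_i)(b_j+\ell_j).
\]
Computing $C_{ijk}=\tfrac12\,\partial g_{ij}/\partial y^k$, inverting $g_{ij}$ by the Sherman--Morrison formula to form $I_i=g^{jk}C_{ijk}$, and computing $h_{ij}$, one then verifies by direct substitution that $C_{ijk}=\frac{1}{n+1}(I_ih_{jk}+I_jh_{ik}+I_kh_{ij})$, i.e. ${\bf M}_y=0$. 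This is routine bookkeeping; the only delicate point is the inversion of $g_{ij}$, where $\|\beta\|_\alpha<1$ secures positive-definiteness.

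For the converse the key remark is that the hypothesis is \emph{fiberwise}: at each $x$ it asks whether a Minkowski norm on $T_xM\simeq\mathbb{R}^n$ with vanishing Matsumoto torsion must equal (a Euclidean norm)$\,+\,$(a linear form), and smoothness in $x$ then follows from the reconstruction. First I would dispose of the locus where ${\bf I}_y=0$: there the C-reducibility relation forces $C_{ijk}=0$, so $F$ is Riemannian by Deicke's theorem, a degenerate Randers metric with $\beta=0$. On the open set $I_i\neq0$ I would exploit that $I_i$ is a fiber gradient: $I_i=\partial\tau/\partial y^i$ for $\tau:=\tfrac12\log\det(g_{jk})$, since $\frac{\partial}{\partial y^i}\log\det(g_{jk})=g^{jk}\partial_{y^i}g_{jk}=2I_i$. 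Substituting this into the C-reducibility relation turns it into the first-order fiber system
\[
\tfrac12\,\frac{\partial g_{ij}}{\partial y^k}=\frac{1}{n+1}\Big(\frac{\partial\tau}{\partial y^i}h_{jk}+\frac{\partial\tau}{\partial y^j}h_{ik}+\frac{\partial\tau}{\partial y^k}h_{ij}\Big),
\]
which I would integrate along the indicatrix. Matching against the Randers distortion $\tau=\frac{n+1}{2}\log(F/\alpha)+\mathrm{const}$ (the identity $\det g_{ij}=(F/\alpha)^{n+1}\det a_{ij}$) then identifies $\alpha$ and $\beta=b_iy^i$ explicitly.

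The hard part is this integration/rigidity step. Contracting the relation with $g^{jk}$, with $I^j$, and with $I^jI^k$ gives scalar identities constraining $\|I\|$ and its fiber derivatives, but these are not enough: one must use the full tensorial equation together with positive-definiteness of $g_{ij}$, both to force the remainder $F-\beta$ to be genuinely quadratic in $y$ and to exclude the companion Kropina branch $\alpha^2/\beta$, which satisfies the same algebraic identity but is not a regular Finsler metric. The hypothesis $n\geq3$ enters precisely here: in dimension two the angular metric is $h_{ij}=m_im_j/\|m\|^2$ and $C_{ijk}=c\,m_im_jm_k$ for a single scalar $c$, whence $M_{ijk}\equiv0$ identically and the statement is vacuous; so any proof must use $n\ge3$ to make the fiber system over-determined enough to pin down the Randers form.
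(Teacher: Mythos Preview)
The paper does not prove this lemma at all: it is quoted from the literature with the citation \cite{MH} (Matsumoto--H\={o}j\={o}), preceded by the remark that Matsumoto first showed ${\bf M}_y=0$ for Randers metrics and that Matsumoto--H\={o}j\={o} later established the converse. There is therefore no ``paper's own proof'' to compare your proposal against.

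On the merits of your outline itself: the forward direction is fine and is exactly the standard computation. Your observations for the converse are also correct as far as they go --- the problem is fiberwise, Deicke handles the locus $I=0$, the identity $I_i=\partial_{y^i}\tau$ with $\tau=\tfrac12\log\det(g_{jk})$ recasts C-reducibility as a first-order system in the fiber, and the $n=2$ degeneracy is real. But you yourself flag that the decisive step, the actual integration/rigidity argument that forces $F=\alpha+\beta$ and rules out Kropina, is not carried out: ``Matching against the Randers distortion\dots then identifies $\alpha$ and $\beta$'' presupposes the answer rather than deriving it. The classical route (Matsumoto--H\={o}j\={o}) does not proceed by guessing $\tau$ and matching; it extracts from C-reducibility a differential constraint on the indicatrix (via the $T$-tensor / main scalars) strong enough to show the indicatrix is a quadric hypersurface, whence $F$ is Randers or Kropina, with regularity selecting Randers. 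Your sketch is a plausible reformulation, but as written it is an outline with an acknowledged gap at the crucial point, not a proof.
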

Let $\alpha=\sqrt{a_{ij}(x)y^iy^j}$ be a Riemannian metric, and $\beta=b_i(x)y^i$ be a 1-form on $M$ with
$\|\beta\|=\sqrt{a^{ij}b_ib_j}<1$. The Finsler metric $F=\alpha+\beta$ is called a Randers metric, which has important
applications both in mathematics and physics.

For a Finsler metric $F=F(x,y)$ on a smooth manifold $M$, geodesic curves are characterized by the system of second
order differential equations
\[
\frac{d^2 x^i}{dt^2}+2G^i\big(x,\frac{dx}{dt}\big)=0,
\]
where the local functions $G^i=G^i(x, y)$ are called the  spray coefficients, and given by
\[
G^i(x,y):=\frac{1}{4}g^{il}\Big[\frac{\partial^2F^2}{\partial x^k \partial y^l}y^k-
\frac{\partial F^2}{\partial x^l}\Big].
\]
In  a standard local  coordinates $(x^i,y^i)$ in $TM$, the vector field
$\textbf{G}=y^i {{\partial} \over {\partial x^i}}-2G^i(x,y){{\partial} \over {\partial y^i}}$ is called the spray of $F$.
A Finsler metric $F$ is  called a Berwald metric, if  $G^i$  are quadratic in $y\in T_xM$  for any $x\in M$.
The Berwald spaces can be viewed as  Finsler spaces modeled on a single Minkowski space \cite{ShDiff}.
\section{Proof of Theorem \ref{mainthm1}}
Let us first consider the case of $dim M = 2$. There exists
a local ortonormal  coframe $\{\omega_1,\omega_1\}$ of Riemannian metric $\alpha$. So $\alpha^2$ can be wrote as
\begin{eqnarray*}
\alpha^2=\omega_1^2+\omega_2^2.
\end{eqnarray*}
If we denote $\alpha=\sqrt{a_{ij}y^iy^j}$ where $y=\sum_{i=1}^{2}y^ie_i$ and $\{e_i\}$ is the dual frame of
$\{\omega_i\}$ then $a_{ij}=\delta_{ij}$ and $a^{ij}=\delta^{ij}$. Adjust coframe $\{\omega_1,\omega_1\}$
properly such that
\begin{eqnarray*}
\beta=k\omega_1.
\end{eqnarray*}
Then $b_1=k$ and $b_2=0$ where $\beta=\sum_{i=1}^{2}b_iy^i$. Hence
\[
\|\beta\|_{\alpha}:=\sqrt{a^{ij}b_ib_j}=k.
\]
For an arbitrary tangent vector $y=ue_1+ve_2\in T_pM$, we can obtain that
\begin{eqnarray*}
&&\alpha(p,y)=\sqrt{u^2+v^2},\hspace{.5cm}\beta(p,y)=ku,\\
&&F(p,y)=\sqrt{c_1(u^2+v^2)+2c_2ku\sqrt{u^2+v^2}+c_3k^2u^2}.
\end{eqnarray*}
Assume that $y^{\perp}$ satisfies:
\begin{eqnarray}
\textbf{g}_y(y,y^{\perp})=0, \ \ \ \textbf{g}_y(y^{\perp},y^{\perp})=F^2(p,y).\label{c0}
\end{eqnarray}
Obviously $y^{\perp}$ is unique because the metric is non-degenerate. The frame $\{y,y^{\perp}\}$ is
called the Berwald frame.

Let
\begin{eqnarray*}
y=r\cos(\theta)e_1+r\sin(\theta)e_2
\end{eqnarray*}
i.e.
\begin{eqnarray*}
u=r\cos(\theta),\hspace{.5cm}v=r\sin(\theta).
\end{eqnarray*}
Plugging the above expression into (\ref{c0}) and computing by Maple program (see
Section \ref{sec1}) yields
\begin{eqnarray}
y^{\perp}=\frac{r\big(-\sin(\theta)(c_2k\cos(\theta)+c_1),c_3k^2\cos(\theta)+c_2k+c_1\cos(\theta)+
c_2k\cos(\theta)^2\big)}{\sqrt{c_1c_3k^2+c_2c_3k^3\cos(\theta)^3+3c_2^2k^2\cos(\theta)^2
+3c_1c_2k\cos(\theta)-c_2^2k^2+c_1^2}}.\label{c2}
\end{eqnarray}
By the definition of the bound of Cartan torsion, it is easy to show that for the Berwald frame $\{y,y^{\perp}\}$,
\begin{eqnarray*}
\|\textbf{C}\ \|_p=\sup_{y\in T_pM_0}\xi(p,y),
\end{eqnarray*}
where
\begin{eqnarray*}
\xi(p,y):=\frac{F(p,y)|\textbf{C}_y(y^{\perp},y^{\perp},y^{\perp})|}{|\textbf{g}_y(y^{\perp},y^{\perp})|^{\frac{3}{2}}}.
\end{eqnarray*}
Again computing by Maple program (see Subsection \ref{sub1} below), we obtain
\begin{eqnarray*}
\xi(p,y)=\frac{3}{2}\big|\frac{c_2k\sin(\theta)\big(c_1+2c_2k\cos(\theta)+c_3k^2\cos(\theta)^2\big)^2}
{(c_1c_3k^2+c_2c_3k^3\cos(\theta)^3+3c_2^2k^2\cos(\theta)^2+3c_1c_2k\cos(\theta)-c_2^2k^2+c_1^2)^{\frac{3}{2}}
}\big|
\end{eqnarray*}
Define two functions on $[0,1]\times [-1,1]$ by following
\begin{eqnarray*}
&&f(k,x):=c_1c_3k^2+c_2c_3k^3x^3+3c_2^2k^2x^2+3c_1c_2kx-c_2^2k^2+c_1^2,\\
&&g(k,x):=\frac{3}{2}\frac{c_2k\sqrt{1-x^2}\big(c_1+2c_2kx+c_3k^2x^2\big)^2}{f(k,x)^{\frac{3}{2}}}.
\end{eqnarray*}
Hence
\begin{eqnarray}
\|\textbf{C}\ \|_p=\max_{0\leq\theta\leq 2\pi}|g(k,\cos\theta)|.\label{c3}
\end{eqnarray}
For a fixed $k=k_0$ $(k_0\in [0,1])$, we have
\begin{eqnarray*}
\frac{\partial}{\partial x}f(k_0,x)=3c_1c_2k_0+6c_2^2k_0^2x+3c_2c_3k_0^3x^2.
\end{eqnarray*}
So from $\frac{\partial}{\partial x}f(k_0,x)=0$,  we have
\begin{eqnarray*}
x\in\Big\{\frac{-c_2+\sqrt{c_2^2-c_1c_3}}{c_3k_0},  \frac{-c_2-\sqrt{c_2^2-c_1c_3}}{c_3k_0}\Big\}.
\end{eqnarray*}
Because of $c_2^2<c_1c_3$,  we conclude that $f(k_0,x)$ is ascending
 or descending. So for $x\in [-1,1]$ we have
\[
f(k_0,x)\geq \min\{f(k_0,-1),f(k_0,1)\}.
\]
By a simple computation, we have
\begin{eqnarray*}
&&f(k_0,1)=c_1^2+3c_1c_2k_0+c_1c_3k_0^2+2c_2^2k_0^2+c_2c_3k_0^3,\\
&&f(k_0,-1)=c_1^2-3c_1c_2k_0+c_1c_3k_0^2+2c_2^2k_0^2-c_2c_3k_0^3.
\end{eqnarray*}
Since $c_1c_3\geq 0$ and $c_1^2>|c_2(3c_1+c_3)|$, then we have
\[
f(k_0,1)> 0,\ \ \  f(k_0,-1)> 0.
\]
So for $k\in [0,1]$ and $x\in [-1,1]$, we have $f(k,x)> 0$. Thus $g(k,x)$ is continuous in $[0,1]\times [-1,1]$
 and has a upper bound.

In higher dimensions, the definition of the Cartan torsion's bound at $p\in M$ is
 \begin{eqnarray*}
 \|\textbf{C}\ \|_p=\sup_{y,u\in T_pM}\frac{F(p,y)|\textbf{C}_y(u,u,u)|}{|\textbf{g}_y(u,u)|^{\frac{3}{2}}}.
\end{eqnarray*}
Considering the plane $P=span\{u,y\},$ from the above conclusion we obtain $\|\textbf{C}\ \|_p$ is bounded.
Furthermore, the bound is independent of the plane $P\subset T_pM$ and the point $p\in M.$
Hence the Cartan torsion is also bounded. This completes the proof.
\section{Proof of Theorem \ref{mainthm2}}
In this section, we are going to prove the Theorem \ref{mainthm2}. Let us first consider the case of $dim M = 2$.
 By the similar method used in proof of Theorem \ref{mainthm1}, for an arbitrary tangent vector $y=ue_1+ve_2\in T_pM$
 we can obtain that
\begin{eqnarray*}
&&\alpha(p,y)=\sqrt{u^2+v^2},\hspace{.5cm}\beta(p,y)=ku\\
&&F(p,y)=c_1\sqrt{u^2+v^2}+c_2ku+c_3\frac{k^2u^2}{\sqrt{u^2+v^2}}.
\end{eqnarray*}
Using the Maple program (see Section \ref{sec2}), we get
\begin{eqnarray}
y^{\perp}=\frac{r\big(-\sin(\theta)(c_1-c_3k^2\cos(\theta)^2),c_1\cos(\theta)+kc_2+2c_3k^2\cos(\theta)-
c_3k^2\cos(\theta)^3\big)}{\sqrt{(-3k^2c_3\cos(\theta)^2+2k^2c_3+c_1)(k^2c_3\cos(\theta)^2+
kc_2\cos(\theta)+c1)}}\label{c2}
\end{eqnarray}
Again computing by Maple program (see subsection \ref{sub2}), we obtain
\begin{eqnarray*}
\xi(p,y)=\frac{3}{2}\big|\frac{k\sin(\theta)\big(-c_1c_2-4k^3c_3^2\cos(\theta)
+8c_3^2k^3\cos(\theta)^3-2k^2c_2c_3+5c_2c_3k^2\cos(\theta)^2\big)}{(3c_3k^2\cos(\theta)^2-2c_3k^2-c1)
\sqrt{(-3c_3k^2\cos(\theta)^2+2c_3k^2+c_1)(c_1+c_2k\cos(\theta)+c_3k^2\cos(\theta)^2)}
}\big|.
\end{eqnarray*}
Define three functions on $[0,1]\times [-1,1]$
\begin{eqnarray*}
&&f_1(k,x):=3c_3k^2x^2-2c_3k^2-c_1,\\
&&f_2(k,x):=c_1+c_2kx+c_3k^2x^2,\\
&&g(k,x):=\frac{3}{2}\frac{k\sqrt{1-x^2}\big(-c_1c_2-4k^3c_3^2x+8c_3^2k^3x^3-2k^2c_2c_3+5c_2c_3k^2x^2\big)}
{f_1(k,x)\sqrt{-f_1(k,x)f_2(k,x)}}.
\end{eqnarray*}
Hence
\begin{eqnarray}
\|\textbf{C}\|_p=\max_{0\leq\theta\leq 2\pi}|g(k,\cos\theta)|.\label{c3}
\end{eqnarray}
For a fixed $k=k_0$ $(k_0\in [0,1])$ the roots  of  $f_2(k_0,x)$ are
 \begin{eqnarray*}
\Big\{\frac{1}{2}\frac{-c_2+\sqrt{c_2^2-4c_1c_3}}{c_3k_0},\frac{1}{2}\frac{-c_2-\sqrt{c_2^2-4c_1c_3}}{c_3k_0}\Big\}.
\end{eqnarray*}
Because of $c_2^2<4c_1c_3$, for $x\in [-1,1]$ and $k\in [0,1]$ we have $f_2(k_0,x)\neq 0$. If $c_3\geq 0$ because
of $c_2^2<4c_1c_3$ we get $c_1\geq 0$ and the maximum of $f_1(k,x)$ in $[-1,1]$ occurred in $x\in\{-1,1\}$.
By simple computation we have:
 \begin{eqnarray}
 f_1(k_0,-1)=f_1(k_0,1)=c_3k_0^2-c_1.\label{c4}
\end{eqnarray}
By the assumption, we have $|c_1|>|c_3|$ so we conclude that
\[
f_1(k_0,-1)=f_1(k_0,1)<0.
\]
If $c_3\leq 0$ because of $c_2^2<4c_1c_3$ we have $c_1\leq 0$ and the minimum of $f_1(k,x)$ in $[-1,1]$ occurred
in $x\in\{-1,1\}$. By the assumption, we have $|c_1|>|c_3|$ so by $ (\ref{c4})$ we conclude that
\[
f_1(k_0,-1)=f_1(k_0,1)>0.
\]
So for $k\in [0,1]$ and $x\in [-1,1]$, we have 
\[
f_1(k,x)\neq 0.
\]
Then $g(k,x)$ is continuous in $[0,1]\times [-1,1]$
and has a upper bound. For the higher dimensions, proof is similar to the 2-dimensional case.
\section{Maple Programs}
In this section, we are give the Maple programs  which  used  to proving the  Theorems \ref{mainthm1} and
Theorem \ref{mainthm2}.
\subsection{ Berwald Frame}
The special and useful Berwald frame was introduced and developed by Berwald. Let $(M, F)$ be a two-dimensional
Finsler manifold. We study two dimensional Finsler space and define a local field of orthonormal frame
$(\ell^i,m^i)$ called the Berwald frame, where  $\ell^i=y^i/F(y)$, $m^i$ is the unit vector with $\ell_i m^i=0$,
$\ell_i=g_{ij}\ell^i$ and $g_{ij}$ is  defined by $g_{ij}=\ell_i\ell_j+m_im_j$.
\subsubsection{Berwald Frame of $F=\sqrt{c_1\alpha^2+2c_2\alpha\beta+c_3\beta^2}$}\label{sec1}
$> restart;$\\
$> with(linalg):$\\
$> F:=sqrt\big((c[1])*(u^{\wedge}2+v^{\wedge}2)+2*(c[2])*k*u*sqrt(u^{\wedge}2+v^{\wedge}2)
+(c[3])*k^{\wedge}2*u^{\wedge}2\big):$\\
$> g:=simplify(1/2*hessian(F^{\wedge}2,[u,v])):$\\
$> gr:=simplify(subs(u=cos(theta),v=sin(theta),g)):$\\
$> y:=vector(2,[r*cos(theta),r*sin(theta)]);$
\begin{eqnarray*}
y:=[u=r \cos(\theta), v=r \sin(\theta)]
\end{eqnarray*}
$> yp:=vector(2):$\\
$> eq:=simplify(evalm(transpose(y)\&*gr\&*yp))=0:$\\
$> x:=solve(eq,yp[1]);$
\begin{eqnarray*}
x=-\frac{yp_2 sin(\theta)(c_2k\cos(\theta)+c_1)}{c_3k^2\cos(\theta)+c_2k+c_1\cos(\theta)+c_2kcos(\theta)^2}
\end{eqnarray*}
$> ny:=simplify(r^{\wedge}2*subs(u=cos(theta),v=sin(theta),F^{\wedge}2));$
\begin{eqnarray*}
ny:=r^2(c_1+2c_2k\cos(\theta)+c_3k^2\cos(\theta)^2)
\end{eqnarray*}
$> yp[1]:=-\sin(theta)*(c[2]*k*\cos(theta)+c[1]):$\\
$> yp[2]:=c[3]*k^{\wedge}2*\cos(theta)+c[2]*k+c[1]*\cos(theta)+c[2]*k*\cos(theta)^{\wedge}2:$\\
$> nyp:=simplify(evalm(transpose(yp)\&*gr\&*yp)):$\\
$> lambda:=simplify(sqrt(r^{\wedge}2*nyp/ny)/r):$\\
$> yp[1]:=yp[1]/lambda:$\\
$> yp[2]:=yp[2]/lambda:$\\
$> print(yp);$
\begin{eqnarray*}
&&\Big[-\frac{\sin(\theta)(c_2k\cos(\theta)+c_1)r}{\sqrt{c_1c_3k^2+c_2c_3k^3\cos(\theta)^3
+3c_2^2k^2\cos(\theta)^2+3c_1c_2k\cos(\theta)-c_2^2k^2+c_1^2}},\\
&&\frac{(c_3k^2\cos(\theta)+c_2k+c_1\cos(\theta)+c_2k\cos(\theta)^2)r}
{\sqrt{c_1c_3k^2+c_2c_3k^3\cos(\theta)^3+3c_2^2k^2\cos(\theta)^2+3c_1c_2k\cos(\theta)-c_2^2k^2+c_1^2}}\Big]
\end{eqnarray*}
\subsubsection{Berwald Frame of $F=c_1\alpha+c_2\beta+c_3\beta^2/\alpha$}\label{sec2}
$> restart;$\\
$> with(linalg):$\\
$> F:=c[1]*sqrt(u^{\wedge}2+v^{\wedge}2)+c[2]*k*u+(c[3]*k^{\wedge}2*u^{\wedge}2)/(sqrt(u^{\wedge}2+v^{\wedge}2)):$\\
$> g:=simplify(1/2*hessian(F^{\wedge}2,[u,v])):$\\
$> gr:=simplify(subs(u=cos(theta),v=sin(theta),g)):$\\
$> y:=vector(2,[r*cos(theta),r*sin(theta)]);$
\begin{eqnarray*}
y:=[u=r \cos(\theta), v=r \sin(\theta)]
\end{eqnarray*}
$> yp:=vector(2):$\\
$> eq:=simplify(evalm(transpose(y)\&*gr\&*yp))=0:$\\
$> x:=solve(eq,yp[1]);$
\begin{eqnarray*}
x=-\frac{(c_3k^2\cos(\theta)^2-c_1)yp_2\sin(\theta)}{c_3k^2\cos(\theta)^3-2c_3k^2\cos(\theta)-c_1\cos(\theta)-c_2k}
\end{eqnarray*}
$> ny:=simplify(r^{\wedge}2*subs(u=cos(theta),v=sin(theta),F^{\wedge}2));$
\begin{eqnarray*}
ny:=r^2(c_1+c_2k\cos(\theta)+c_3k^2\cos(\theta)^2)^2
\end{eqnarray*}
$> yp[1]:=-(-c[1]+c[3]*k^{\wedge}2*\cos(\theta)^{\wedge}2)*\sin(\theta):$\\
$> yp[2]:=:-c[1]*\cos(theta)-k*c[2]-2*c[3]*k^{\wedge}2*\cos(theta)+c[3]*k^{\wedge}2*\cos(theta)^{\wedge}3:$\\
$> nyp:=simplify(evalm(transpose(yp)\&*gr\&*yp)):$\\
$> lambda:=simplify(sqrt(r^{\wedge}2*nyp/ny)/r):$\\
$> yp[1]:=yp[1]/lambda:$\\
$> yp[2]:=yp[2]/lambda:$\\
$> print(yp);$
\begin{eqnarray*}
&&\big[\frac{-(-c_1+c_3k^2\cos(\theta))
r\sin(\theta)}{\sqrt{(-3k^2c_3\cos(\theta)^2+2k^2c_3+c_1)(k^2c_3\cos(\theta)^2+kc_2\cos(\theta)+c1)}}\\
&&,\frac{(-c_1\cos(\theta)-c_2k-2c_3k^2\cos(\theta)+c_3k^2\cos(\theta)^3)r}
{\sqrt{(-3k^2c_3\cos(\theta)^2+2k^2c_3+c_1)(k^2c_3\cos(\theta)^2+kc_2\cos(\theta)+c1)}}\big]
\end{eqnarray*}
\subsubsection{The Method of Computation}
\textbf{Step 1:} Solve the equation $\textbf{g}_y(y,y^{\perp})=0$.
\begin{eqnarray*}
(x,yp_{[2]})=(\frac{yp_{[2]}yp_{[1]}}{yp_{[2]}},yp_{[2]})
\end{eqnarray*}
and $yp :=(yp_{[1]}, yp_{[2]})$ is a particular solution.
\bigskip

\noindent
\textbf{Step 2:} Assume that $y^{\perp}=\frac{1}{\lambda}yp$
 is the satisfied solution. Notice that
\begin{eqnarray*}
\textbf{g}_y(y^{\perp},y^{\perp})=F^2(y):=ny
\end{eqnarray*}
Then we get
\begin{eqnarray*}
\lambda=\sqrt{\frac{nyp}{ny}}
\end{eqnarray*}
which $nyp$ is defined by
\begin{eqnarray*}
nyp:=\textbf{g}_y(yp,yp)
\end{eqnarray*}
\bigskip

\noindent
\textbf{Step 3:} Plug these results into $y^{\perp}$ , we get the Berwald frame $\{y,y^{\perp}\}$.
 \subsection{Computation of $\xi(p,y)$ }
Here, we are going to compute $\xi(p,y)$  for the Finsler metrics defined by
$F=\sqrt{c_1\alpha^2+2c_2\alpha\beta+c_3\beta^2}$ and $F=c_1\alpha+c_2\beta+c_3\beta^2/\alpha$,
where $c_1, c_2$ and $c_3$ are real numbers.
 \subsubsection{Computation of $\xi(p,y)$ for $F=\sqrt{c_1\alpha^2+2c_2\alpha\beta+c_3\beta^2}$}\label{sub1}
 $> nyp:=simplify(evalm(transpose(yp)\&*gr\&*yp));$\\
 \begin{eqnarray*}
nyp:=r^2\big(c_1+2c_2k\cos(\theta)+c_3k^2\cos(\theta)^3\big)
\end{eqnarray*}
$> bc:=factor(abs(simplify(r^{\wedge}2*subs(t=0,q=0,p=0,diff(subs(u=\cos(theta)+t*yp[1]/$\\
$> r+q*yp[1]/ r+p*yp[1]/r,$\\
$> v=\sin(theta)+t*yp[2]/r+q*yp[2]/r+p*yp[2]/r,F^{\wedge}2/4),[t,q,p])))/nyp));$
\begin{eqnarray*}
bc:=\frac{3}{2}\Big|\frac{c_2k\sin(\theta)\big(c_1+2c_2k\cos(\theta)+c_3k^2\cos(\theta)^2\big)^2}
{(c_1c_3k^2+c_2c_3k^3\cos(\theta)^3+3c_2^2k^2\cos(\theta)^2+3c_1c_2k\cos(\theta)-c_2^2k^2+c_1^2)^{\frac{3}{2}}
}\Big|
\end{eqnarray*}

\bigskip

\subsubsection{Computation of $\xi(p,y)$ for $F=c_1\alpha+c_2\beta+c_3\beta^2/\alpha$}\label{sub2}
$> nyp:=simplify(evalm(transpose(yp)\&*gr\&*yp));$\\
 \begin{eqnarray*}
nyp:=r^2(c_1^2+2c_1c_2k\cos(\theta)+2c_1c_3k^2\cos(\theta)^2+c_2^2k^2\cos(\theta)^2+2c_2c_3k^3
\cos(\theta)^3+c_3^2k^4\cos(\theta)^4)
\end{eqnarray*}
$> bc:=factor(abs(simplify(r^{\wedge}2*subs(t=0,q=0,p=0,diff(subs(u=\cos(theta)+t*yp[1]/$\\
$> r+q*yp[1]/ r+p*yp[1]/r,$\\
$> v=\sin(theta)+t*yp[2]/r+q*yp[2]/r+p*yp[2]/r,F^{\wedge}2/4),[t,q,p])))/nyp));$
\begin{eqnarray*}
bc:=\frac{3}{2}\Big|\frac{k\sin(\theta)\big(-c_1c_2-4k^3c_3^2\cos(\theta)+8c_3^2k^3\cos(\theta)^3
-2k^2c_2c_3+5c_2c_3k^2\cos(\theta)^2\big)}{(3c_3k^2\cos(\theta)^2-2c_3k^2-c1)\sqrt{(-3c_3k^2\cos(\theta)^2
+2c_3k^2+c_1)(c_1+c_2k\cos(\theta)+c_3k^2\cos(\theta)^2)}
}\Big|
\end{eqnarray*}

\subsubsection{The method of computation:}
Let
\begin{eqnarray*}
nyp:=g_y(yp,yp)=g_y(y^{\perp},y^{\perp})
\end{eqnarray*}
Then compute
\begin{eqnarray*}
bc=\frac{F(y)\textbf{C}_y(y^{\perp},y^{\perp},y^{\perp})}{g_y(y^{\perp},y^{\perp})^{\frac{3}{2}}}
\end{eqnarray*}
This is prepared for estimating the bound of Cartan torsion.

\section{Some Remarks}
In this section, we will link our  theorems to the results in \cite{TS} and discuss some related
problems. In \cite{TS}, the following is proved.
\begin{thm}\label{MainTHM2}{\rm (\cite{TS})}
\emph{Let $F=\alpha\phi(s)$ be a non-Riemannian $(\alpha,\beta)$-metric on a manifold $M$ of dimension
$n\geq3$. Then the norm of Cartan and mean Cartan torsion of $F$  satisfy in following relation
\begin{equation}\label{4}
\|{\bf C}\|=\sqrt{\frac{3p^2+6p\ q+(n+1)q^2}{n+1}}\ \|{\bf I}\|,
\end{equation}
where $p=p(x,y)$ and $q=q(x,y)$ are scalar function on $TM$ satisfying $p+q=1$ and given by following}
\begin{eqnarray}\label{1}
p\!\!\!\!&= &\!\!\!\!\ \frac{n+1}{a A}\Big[s( \phi\phi'' + \phi'\phi'  ) -  \phi \phi'\Big],\label{p}\\
a\!\!\!\!&:=&\!\!\!\! \phi (  \phi -s \phi'),\\
A\!\!\!\!&= &\!\!\!\!\ (n-2) \frac{s\phi''}{\phi-s\phi'} -(n+1) \frac{\phi'}{\phi}
-\frac{(b^2-s^2)\phi'''-3s\phi''}{(b^2-s^2)\phi''+\phi-s\phi'}.\label{A}
\end{eqnarray}
\end{thm}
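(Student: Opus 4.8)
The plan is to exploit the very rigid algebraic shape that the Cartan torsion takes for an $(\alpha,\beta)$-metric: it is generated by the angular metric $h_{ij}$ together with a single covector. I take the norms in (\ref{4}) to be the pointwise tensorial norms $\|{\bf C}\|^2=C_{ijk}C^{ijk}$ and $\|{\bf I}\|^2=I_iI^i$, indices raised by $g^{ij}$ (this is the norm for which the stated identity holds). First I would record the structural fact that there is a covector $m_i$, characterized as the $\textbf{g}_y$-orthogonal part of $b_i$ (so $m_iy^i=0$), and scalars $\tau_1,\tau_3$ on $TM_0$ with
\[
C_{ijk}=\tau_1\big(h_{ij}m_k+h_{jk}m_i+h_{ki}m_j\big)+\tau_3\,m_im_jm_k.
\]
This is forced by two inputs: the explicit expression of $g_{ij}$ for $F=\alpha\phi(s)$ as a combination of $a_{ij}$, $b_ib_j$, the symmetrized $b_{(i}y_{j)}$ and $y_iy_j$, together with $C_{ijk}=\tfrac12\,\partial g_{ij}/\partial y^k$; and the identity $C_{ijk}y^i=0$, which confines every slot to the angular hyperplane and, contracting the displayed form with $y^i$, forces $m_iy^i=0$. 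These are the only two totally symmetric invariants one can build from the projection $h$ and the vector $m$.

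Next I would contract to pass to the mean Cartan torsion. Using $g^{jk}h_{jk}=n-1$, $h^i_{\ k}m^k=m^i$ (valid because $m_iy^i=0$) and $g^{jk}m_jm_k=\|m\|^2$, one gets $I_i=g^{jk}C_{ijk}=\kappa\,m_i$ with $\kappa:=(n+1)\tau_1+\tau_3\|m\|^2$, so $I_i$ is parallel to $m_i$. I then define
\[
p:=\frac{(n+1)\tau_1}{\kappa},\qquad q:=\frac{\tau_3\|m\|^2}{\kappa},
\]
which by construction satisfy $p+q=1$; note that $p,q$ are invariant under rescaling $m$, so they are well defined. The closed forms (\ref{p})--(\ref{A}) are then recovered by substituting the explicit $\tau_1,\tau_3,\|m\|^2$ (as functions of $\phi,\phi',\phi'',\phi'''$, $s$ and $b$) into these definitions; in particular the norm $\|m\|^2$ is where the combination $b^2-s^2$ enters, producing the denominator appearing in $A$.

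With the structure in hand the norm identity is a short computation. I would expand
\[
\|{\bf C}\|^2=C_{ijk}C^{ijk}=3(n+1)\tau_1^2\|m\|^2+6\,\tau_1\tau_3\|m\|^4+\tau_3^2\|m\|^6,
\]
using the same contraction identities (the cross terms $m_ih_{jk}m^jh^{ik}$ collapse to $\|m\|^2$ precisely because $m_iy^i=0$), and compute $\|{\bf I}\|^2=\kappa^2\|m\|^2$. Substituting $\tau_1=p\kappa/(n+1)$ and $\tau_3=q\kappa/\|m\|^2$, every term carries the common factor $\kappa^2\|m\|^2=\|{\bf I}\|^2$, and collecting gives $\|{\bf C}\|^2=\frac{3p^2+6pq+(n+1)q^2}{n+1}\,\|{\bf I}\|^2$; taking square roots yields (\ref{4}).

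The main obstacle is not the conceptual skeleton (structure $\Rightarrow$ contraction $\Rightarrow$ norm), which is clean, but the explicit identification of $p$ with the stated expression (\ref{p})--(\ref{A}). That step requires differentiating $g_{ij}$ to read off $\tau_1$ and $\tau_3$ in closed form and computing $\|m\|^2$, and it is exactly here that the third derivative $\phi'''$ and the factor $(b^2-s^2)\phi''+\phi-s\phi'$ surface; verifying that these assemble into the quoted $A$ is the genuinely laborious part of the argument.
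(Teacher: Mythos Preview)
The paper does not actually prove this theorem; it is quoted from \cite{TS} and only the statement together with the structural formula~(\ref{0}) (semi-C-reducibility of $(\alpha,\beta)$-metrics) appears here. So there is no in-paper proof to compare against.

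That said, your approach is the natural one and is almost certainly what \cite{TS} does: start from the semi-C-reducible form
\[
C_{ijk}=\frac{p}{n+1}\big(h_{ij}I_k+h_{jk}I_i+h_{ki}I_j\big)+\frac{q}{\|{\bf I}\|^2}I_iI_jI_k,
\]
which is exactly the paper's equation~(\ref{0}), and contract. Your derivation of $\|{\bf C}\|^2=C_{ijk}C^{ijk}$ is correct (the coefficients $3(n+1)$, $6$, $1$ are right, using $h_{ij}h^{ij}=n-1$ and $h^i_{\ j}m^j=m^i$), and the substitution $\tau_1=p\kappa/(n+1)$, $\tau_3=q\kappa/\|m\|^2$ cleanly produces~(\ref{4}). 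You are also right to flag that the norm here must be the pointwise tensorial norm $C_{ijk}C^{ijk}$, not the sup-norm defined in the Introduction; the formula is an identity at each $(x,y)$ with $p,q$ varying, and this is consistent with the paper's later remark that a Randers metric has $\|{\bf C}\|=\sqrt{3/(n+1)}\,\|{\bf I}\|$.

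The one part you correctly identify as ``laborious'' --- matching your $p$ to the explicit expression~(\ref{p})--(\ref{A}) via the formulas for $g_{ij}$, $C_{ijk}$ and $\|m\|^2$ in terms of $\phi$ --- is genuine bookkeeping but contains no hidden obstacle; the factor $(b^2-s^2)\phi''+\phi-s\phi'$ is precisely $\det(g_{ij})/\det(a_{ij})$ up to the standard multiplier, and $\phi'''$ enters through $\partial_{y^k}$ hitting the $\phi''$-term in $g_{ij}$. Your sketch is sound.
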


\bigskip
\noindent
The  Cartan tensor of an $(\alpha,\beta)$-metric  is given by following
\begin{equation}\label{0}
C_{ijk}={\frac{p}{1+n}}\{h_{ij}I_k+h_{jk}I_i+h_{ki}I_j\}+\frac{q}{\|{\bf I}\|^2}I_iI_jI_k.
\end{equation}
where $p=p(x,y)$  and $q=q(x,y)$ are scalar functions on $TM$ satisfying $p+q=1$ and $p$ is defined by (\ref{p}).
It is remarkable that, a Finsler metric is called semi-C-reducible if its Cartan tensor is given by the equation
(\ref{0}).  It is proved that every non-Riemannian $(\alpha,\beta)$-metric on a manifold $M$ of dimension $n\geq3$
is semi-C-reducible \cite{Mat5}. A Finsler metric $F$ is said to be $C2$-like  if $p=0$ and is called C-reducible if $q=0$.

It is well-known conclusive theorem that every C-reducible Finsler
metric is Randers metric. Now, we have a natural problem: Are semi-C-reducible metrics necessarily $(\alpha,\beta)$-metric?

\begin{cor}
Let $F=\alpha\phi(s)$ be a non-Randersian  $(\alpha,\beta)$-metric on a manifold $M$ of dimension $n\geq3$. Then $F$  is not a C2-like  metric.
\end{cor}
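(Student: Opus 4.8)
The plan is to rule out the C2-like case $p=0$ by showing it would force $F$ to be Riemannian, hence Randersian, contradicting the hypothesis. First I would record that, since $F$ is non-Randersian and $\dim M = n\geq 3$, Lemma \ref{MaHo} guarantees that the Matsumoto torsion of $F$ does not vanish; as $M_{ijk}$ is built from $C_{ijk}$, this already forces $C_{ijk}\neq 0$, so $F$ is non-Riemannian. Consequently Theorem \ref{MainTHM2} applies and the scalar $p$ is given by the formula (\ref{p}) with $aA\neq 0$. Under the (to be contradicted) assumption that $F$ is C2-like, $p=0$ is therefore equivalent to the vanishing of the numerator of (\ref{p}), i.e.
\[
s\big(\phi\phi'' + (\phi')^2\big) - \phi\phi' = 0
\]
for every admissible $s$.

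The next step is to integrate this ordinary differential equation. Setting $\psi := \phi\phi' = \tfrac{1}{2}(\phi^2)'$, the bracket equals $(\phi')^2 + \phi\phi'' = \psi'$, so the displayed identity reads $s\psi' - \psi = 0$, whose solutions are $\psi = Cs$ for a constant $C$. Integrating once more yields $\phi(s)^2 = Cs^2 + D$ for constants $C,D$. Substituting $s = \beta/\alpha$ gives
\[
F^2 = \alpha^2\phi(s)^2 = \alpha^2\big(Cs^2 + D\big) = D\alpha^2 + C\beta^2 = \big(Da_{ij} + Cb_ib_j\big)y^iy^j,
\]
a quadratic form in $y$; hence $F$ is Riemannian.

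To close the argument, I would invoke that a Riemannian metric has vanishing Cartan torsion, hence vanishing Matsumoto torsion, so by Lemma \ref{MaHo} it is a (degenerate, $\beta=0$) Randers metric. This contradicts the hypothesis that $F$ is non-Randersian, forcing $p\neq 0$; that is, $F$ is not C2-like. I expect the only genuinely delicate point to be the legitimacy of passing from $p=0$ to the vanishing of the numerator, which hinges on $aA\neq 0$; this is secured by first establishing non-Riemannianity from Lemma \ref{MaHo}, which is precisely where the dimension hypothesis $n\geq 3$ is used. The ODE integration and the recognition of $F^2$ as a quadratic form are routine.
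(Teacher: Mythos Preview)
Your argument is correct and follows essentially the same route as the paper: reduce C2-likeness to the ODE $s(\phi\phi''+(\phi')^2)-\phi\phi'=0$, integrate to $\phi^2=Cs^2+D$, and observe that $F^2=D\alpha^2+C\beta^2$ is Riemannian, contradicting the hypothesis. The paper is terser (it simply quotes the ODE from Theorem~\ref{MainTHM2} and states the solution), while you supply the substitution $\psi=\phi\phi'$ and justify applicability of Theorem~\ref{MainTHM2} via non-Riemannianity; the latter could be shortened by noting directly that a Riemannian metric is the Randers metric $\alpha+0$, so non-Randersian already implies non-Riemannian without invoking Lemma~\ref{MaHo}.
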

\begin{proof}
By Theorem \ref{MainTHM2}, $F$ is a C2-like  metric if and only if $\phi$ satisfies the following
\be\label{ODE}
s ( \phi\phi'' + \phi'\phi'  ) -  \phi \phi'=0.
\ee
Solving (\ref{ODE}), we obtain that
\[
\phi=\sqrt{c_1s^2+c_2},
\]
where $c_1$ and $c_2$ are two real constant. For this $\phi$, the $(\alpha,\beta)$-metric $F=\alpha\phi(s)$ is Riemannian, which is a contradiction.
\end{proof}
Thus the converse of Theorem \ref{MainTHM2} is not true.

\bigskip
For  a generalized Randers metric $F=\sqrt{c_1\alpha^2+2c_2\alpha\beta+c_3\beta^2}$ on a manifold $M$, we
have 
\[
\phi=\sqrt{c_1+2c_2s+c_3s^2}.
\]
Then we get the following
\begin{eqnarray*}
a\!\!\!\!&=&\!\!\!\! c_1+c_2s\\
A\!\!\!\!&=&\!\!\!\! \frac{3(n-2)sk}{(c_1+c_2s)\phi^4}-\frac{(n+1)(c_2+c_3s)}{\phi^2}+
\frac{3(c_cc_3-c_2^2)s\phi^2-3(b^2-s^2)k}{\phi^2\big[(c_1+c_2s)\phi^2+(b^2-s^2)(c_cc_3-c_2^2)s\big]},
\end{eqnarray*}
where $k:=(c_2^2-c_1c_3)(c_1+c_3s)$. Thus
\begin{eqnarray}
p=-\frac{(n+1)c_2}{(c_1+c_2s)A}.\label{pgr}
\end{eqnarray}
Then we get the following.
\begin{cor}\label{corgr}
Let $F=\sqrt{c_1\alpha^2+2c_2\alpha\beta+c_3\beta^2}$ be a generalized Randers metric on a manifold $M$.
Then the relation between the norm of Cartan and mean Cartan torsion of $F$  satisfy in (\ref{4}) where $p$
is given by (\ref{pgr}).
\end{cor}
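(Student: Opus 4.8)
The plan is to read Corollary~\ref{corgr} as a direct specialization of Theorem~\ref{MainTHM2} to the profile function $\phi(s)=\sqrt{c_1+2c_2s+c_3s^2}$, so that the only genuine work is to show that the universal formula (\ref{p}) for $p$ collapses to the explicit expression (\ref{pgr}). First I would observe that the generalized Randers metric is exactly the $(\alpha,\beta)$-metric $F=\alpha\phi(s)$ with this $\phi$, and that it is non-Riemannian whenever $c_2\neq 0$: indeed $F^2=c_1\alpha^2+2c_2\alpha\beta+c_3\beta^2$, and the cross term $2c_2\alpha\beta$ fails to be a quadratic polynomial in $y$ precisely because $\alpha$ is not, while $c_1\alpha^2$ and $c_3\beta^2$ are. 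Hence, on a manifold of dimension $n\geq3$, Theorem~\ref{MainTHM2} applies verbatim and already delivers the relation (\ref{4}); what remains is purely the identification of the scalar $p$.

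The second step is to differentiate $\phi$ implicitly through $\phi^2=c_1+2c_2s+c_3s^2$ rather than expanding the square root. Differentiating once gives $\phi\phi'=c_2+c_3s$, and differentiating that identity again gives $(\phi')^2+\phi\phi''=c_3$. These two relations are exactly the combinations of $\phi,\phi',\phi''$ that occur in the quantity $a$ and in the bracket of (\ref{p}), so no radicals ever have to be carried through the computation.

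Next I would evaluate the two clean building blocks. For $a=\phi(\phi-s\phi')$ I rewrite $\phi-s\phi'=(\phi^2-s\,\phi\phi')/\phi=(c_1+2c_2s+c_3s^2-s(c_2+c_3s))/\phi=(c_1+c_2s)/\phi$, whence $a=c_1+c_2s$. For the bracket in (\ref{p}) I substitute the two identities above into $s(\phi\phi''+\phi'\phi')-\phi\phi'$ to obtain $s\,c_3-(c_2+c_3s)=-c_2$. Feeding these into (\ref{p}) gives $p=\frac{n+1}{aA}(-c_2)=-\frac{(n+1)c_2}{(c_1+c_2s)A}$, which is precisely (\ref{pgr}); combining this with (\ref{4}) proves the corollary.

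The only laborious ingredient is the explicit evaluation of $A$ from (\ref{A}), which requires $\phi'''=-3(c_1c_3-c_2^2)\phi'/\phi^4$ together with the denominator $(b^2-s^2)\phi''+\phi-s\phi'$, and I expect the algebraic simplification to the displayed closed form for $A$ to be the main obstacle. I would emphasize, however, that $A$ need not be simplified at all for the stated relation to hold: it may be carried symbolically, since the very same $A$ appears in Theorem~\ref{MainTHM2} and in (\ref{pgr}). Thus the corollary follows immediately once $a$ and the bracket are computed, and the explicit formula for $A$ is an optional convenience rather than a logical necessity.
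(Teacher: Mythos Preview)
Your proposal is correct and follows essentially the same approach as the paper: the corollary is just Theorem~\ref{MainTHM2} specialized to $\phi(s)=\sqrt{c_1+2c_2s+c_3s^2}$, and the paper obtains (\ref{pgr}) by precisely the same computation of $a=c_1+c_2s$ and of the bracket $s(\phi\phi''+\phi'^2)-\phi\phi'=-c_2$ that you carry out. Your observation that the explicit closed form of $A$ is logically unnecessary for the corollary is a valid remark that the paper does not make explicit, but this does not constitute a different method.
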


\bigskip

For the  metric $F=c_1\alpha+c_2\beta+c_3\beta^2/\alpha$, we have  
\[
\phi=c_1+c_2s+c_3s^2.
\]
Then we get the following
\begin{eqnarray*}
a\!\!\!\!&=&\!\!\!\! (c_1-c_3s^2)(c_1+c_2s+c_3s^2)\\
A\!\!\!\!&=&\!\!\!\! \frac{2(n-2)c_3s}{c_1-c_3s^2}-\frac{(n+1)(c_2+2c_3s)}{c_1+c_2s+c_3s^2}+\frac{6c_3 s}
{(c_1-c_3 s^2)+2(b^2-s^2)c_3}.
\end{eqnarray*}
Thus
\be
p=\frac{n+1}{(c_1+c_3s^2)(c_1+c_2s+c_3s^2)A}\Big[3c_2c_3s^2+4c_3s^3-c_1c_2\Big].\label{RK}
\ee
By taking $c_3=0$,  we have the Randers metric $F=c_1\alpha+c_2\beta$. In this case, by (\ref{RK}) we get $p=1$ and $q=0$.
Thus for a Randers metric, we have the following
\[
C_{ijk}={\frac{1}{1+n}}\{h_{ij}I_k+h_{jk}I_i+h_{ki}I_j\}, \ \ \textrm{and}
\ \ \|{\bf C}\|=\sqrt{\frac{3}{n+1}}\ \|{\bf I}\|.
\]
Now,  if we put $c_1=c_3=1$ and $c_2=2$ then we get Berwald metric $F=\frac{(\alpha+\beta)^2}{\alpha}$.
Similar to the Corollary \ref{corgr}, we get the following.
\begin{cor}
Let $F=c_1\alpha+c_2\beta+c_3\beta^2/\alpha$ be an $(\alpha, \beta)$-metric on a manifold $M$.
Then the relation between the norm of Cartan and mean Cartan torsion of $F$  satisfy in (\ref{4}) where
$p$ is given by (\ref{RK}).
\end{cor}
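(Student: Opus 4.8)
The plan is to derive this corollary as a direct specialization of Theorem \ref{MainTHM2}, in exact parallel with the proof of Corollary \ref{corgr}. Since $F=c_1\alpha+c_2\beta+c_3\beta^2/\alpha$ has the form $F=\alpha\phi(s)$ with
\[
\phi(s)=c_1+c_2s+c_3s^2,
\]
the only point to secure before invoking the theorem is that $F$ is a non-Riemannian $(\alpha,\beta)$-metric on a manifold of dimension $n\geq 3$; I would impose this as the standing hypothesis, noting that a quadratic $\phi$ of this shape is not of the degenerate form $\sqrt{c's^2+c''}$ that forces $F$ to be Riemannian (compare the Corollary following Theorem \ref{MainTHM2}).

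First I would record the derivatives $\phi'=c_2+2c_3s$, $\phi''=2c_3$, and, decisively, $\phi'''=0$. The vanishing of $\phi'''$ is the structural feature that makes the last summand of $A$ in (\ref{A}) collapse to a single rational term. Computing $\phi-s\phi'=c_1-c_3s^2$ then yields at once
\[
a:=\phi(\phi-s\phi')=(c_1-c_3s^2)(c_1+c_2s+c_3s^2),
\]
and substituting $\phi,\phi',\phi'',\phi'''$ into (\ref{A}) reproduces the three-term expression for $A$ displayed just above the corollary.

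The computational core is the bracket in (\ref{p}). I would expand the two pieces
\[
\phi\phi''+(\phi')^2=(2c_1c_3+c_2^2)+6c_2c_3\,s+6c_3^2\,s^2,\qquad \phi\phi'=c_1c_2+(2c_1c_3+c_2^2)\,s+3c_2c_3\,s^2+2c_3^2\,s^3,
\]
so that in $s\bigl(\phi\phi''+(\phi')^2\bigr)-\phi\phi'$ the linear-in-$s$ contributions cancel exactly, leaving the numerator recorded in (\ref{RK}). Dividing by $aA$ and restoring the factor $n+1$ then gives $p$ precisely as in (\ref{RK}). The main obstacle here is purely bookkeeping: tracking the cancellation of the $O(s)$ terms and keeping the denominator $aA$ in factored form rather than expanded.

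With $p$ thus identified and $q:=1-p$, the relation (\ref{4}) between $\|\mathbf{C}\|$ and $\|\mathbf{I}\|$ follows immediately from Theorem \ref{MainTHM2}. I would close with the consistency checks already visible in the text preceding the statement: setting $c_3=0$ recovers the Randers value $p=1$, $q=0$, while $c_1=c_3=1,\ c_2=2$ specializes to Berwald's metric $F=(\alpha+\beta)^2/\alpha$, confirming that the formula behaves correctly on the known cases.
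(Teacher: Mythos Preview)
Your proposal is correct and follows exactly the paper's approach: the paper simply records $\phi=c_1+c_2s+c_3s^2$, computes $a$, $A$, and the bracket $s(\phi\phi''+\phi'^2)-\phi\phi'$ directly, and states the corollary as an immediate specialization of Theorem~\ref{MainTHM2}, just as you outline. Your write-up is in fact more explicit than the paper's, which presents the formulas for $a$, $A$, and $p$ without intermediate steps and gives no separate proof for the corollary.
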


\bigskip

Now, let $c_2^2=c_1c_3$. Then
\[
k=0 \ \ \   \textrm{and} \ \  A=-\frac{(n+1)(c_2+c_3s)}{\phi^2}.
\]
In this case, it is easy to see that the norm of Cartan torsions of Finsler metric
$F=\sqrt{c_1\alpha^2+2c_2\alpha\beta+c_3\beta^2}$ is independent of $b$. It is an interesting problem, to find a
subclass of $(\alpha, \beta)$-metrics whose bound on the Cartan torsion is independent of $b=||\beta||_\alpha$.
Here, we give some Finsler metrics with such property. For this work, we  find all of solutions that for them
the numerator of final sentence in  (\ref{A}) is vanishing, i.e.,
\be
(b^2-s^2)\phi'''-3s\phi''=0.\label{ODE}
\ee
The solutions of (\ref{ODE}) are given by following
\begin{eqnarray}
\phi_1=-\frac{d_1\sqrt{s^2-b^2}}{b^2}+d_2s+d_3\label{p1}
\end{eqnarray}
and
\begin{eqnarray}
\phi_2=\frac{d_1\sqrt{b^2-s^2}}{b^2}+d_2s+d_3,\label{p2}
\end{eqnarray}
where $d_1,d_2,d_3$ are constants. Then we get the following.
\begin{thm}
Let $F=\alpha\phi(s)$ are the $(\alpha, \beta)$-metrics  defined by (\ref{p1}) or (\ref{p2}).
Then the norm of Cartan torsion of $F$ is independent of $b=||\beta||$.
\end{thm}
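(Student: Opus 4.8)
The plan is to reduce everything to the identity (\ref{4}) of Theorem~\ref{MainTHM2}, which expresses $\|{\bf C}\|$ through the single scalar $p$ (equivalently $q=1-p$) and the mean Cartan torsion $\|{\bf I}\|$. Since (\ref{4}) holds for every non-Riemannian $(\alpha,\beta)$-metric, it suffices to understand how $b$ enters the two factors on its right-hand side once $\phi=\phi_1$ or $\phi=\phi_2$ is inserted, and to show the dependence drops out.

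First I would verify by direct differentiation that $\phi_1,\phi_2$ solve $(b^2-s^2)\phi'''-3s\phi''=0$, i.e. (\ref{ODE}); for $\phi_2$ one finds $\phi_2''=-d_1(b^2-s^2)^{-3/2}$ and $\phi_2'''=-3d_1s(b^2-s^2)^{-5/2}$, so the two terms cancel, and similarly for $\phi_1$. The point of this ODE is that $\Delta:=(b^2-s^2)\phi''+\phi-s\phi'$, which is the denominator of the last fraction in (\ref{A}), satisfies $\Delta'=(b^2-s^2)\phi'''-3s\phi''$; hence along our solutions $\Delta$ is constant (for $\phi_2$ one computes $\phi_2-s\phi_2'=d_1(b^2-s^2)^{-1/2}+d_3$ and $(b^2-s^2)\phi_2''=-d_1(b^2-s^2)^{-1/2}$, so $\Delta=d_3$). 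Consequently the entire last term of $A$ in (\ref{A}) vanishes and $A$ collapses to $A_0=(n-2)\tfrac{s\phi''}{\phi-s\phi'}-(n+1)\tfrac{\phi'}{\phi}$, which no longer carries the isolated factor $(b^2-s^2)$ that, for a fixed profile $\phi(s)$, is the sole channel through which $b$ enters both $A$ and the scalar $p$ of (\ref{p}).

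Second, to control $\|{\bf I}\|$ I would use $I_i=\tfrac12\partial_{y^i}\ln\det(g_{jk})$ together with the classical determinant formula $\det(g_{jk})=\phi^{n+1}(\phi-s\phi')^{n-2}\,\Delta\,\det(a_{jk})$. Because $\partial_{y^i}=\alpha^{-1}(b_i-s\alpha^{-1}y_i)\tfrac{d}{ds}$ on functions of $s$, and the $s$-derivative of $(n+1)\ln\phi+(n-2)\ln(\phi-s\phi')+\ln\Delta$ is precisely $-A$, this yields $I_i=-\tfrac{A}{2\alpha}(b_i-s\alpha^{-1}y_i)$. Raising indices with $g^{ij}(b_i-s\alpha^{-1}y_i)(b_j-s\alpha^{-1}y_j)=(b^2-s^2)/(\phi\Delta)$ gives $g^{ij}I_iI_j=\tfrac{A^2(b^2-s^2)}{4\alpha^2\phi\Delta}$, hence $\|{\bf I}\|^2=\tfrac{A^2\phi(b^2-s^2)}{4\Delta}$ after fixing the scale by $F(y)=1$. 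Feeding $A=A_0$, the constant $\Delta$, and $p$ from (\ref{p}) into (\ref{4}) then produces a closed formula for $\|{\bf C}\|$ in the two variables $s$ and $b$.

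The main obstacle is the final step: the explicit solutions $\phi_1,\phi_2$ reintroduce $b$ through the radicals $\sqrt{b^2-s^2}$, so neither $p$ (via $A_0$) nor $\|{\bf I}\|^2$ is separately free of $b$, and one must exhibit the cancellation in their combination. I expect the decisive device to be the substitution $s=b\sin\psi$, $\sqrt{b^2-s^2}=b\cos\psi$, under which $\phi$, $\phi'$, $\phi''$, and $\Delta=d_3$ should rearrange so that every surviving power of $b$ either disappears or factors out of the supremum defining $\|{\bf C}\|$; verifying this, and checking that the admissible range of $\psi$ does not itself reintroduce a $b$-dependence, is where the real work lies. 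I would carry out this step most carefully and cross-check it against the already-settled case $c_2^2=c_1c_3$ of the generalized Randers family, where $\Delta$ is likewise constant and the same mechanism applies.
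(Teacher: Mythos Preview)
The paper gives no proof beyond the derivation that immediately precedes the theorem: it sets the numerator $(b^2-s^2)\phi'''-3s\phi''$ of the last fraction in (\ref{A}) equal to zero, writes down the general solution $\phi_1,\phi_2$, and then simply asserts the result. The entire implicit argument is ``the last term of $A$ drops out.'' You have reproduced exactly this mechanism---your observation that $\Delta'=(b^2-s^2)\phi'''-3s\phi''$ and hence $\Delta\equiv d_3$ along the solutions, so that $A$ collapses to $A_0$---and done it more carefully than the paper. So at the level of what the paper actually argues, your proposal already contains it.

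Where you go beyond the paper is in flagging two points it does not address, and both are genuine. First, $\|{\bf I}\|$ itself carries an explicit factor $(b^2-s^2)$ via your formula $g^{ij}I_iI_j=A^2(b^2-s^2)/(4\alpha^2\phi\Delta)$, so the vanishing of the last summand of $A$ is by itself \emph{not} sufficient to make the right side of (\ref{4}) $b$-free; this already affects the paper's own motivating example $c_2^2=c_1c_3$. Second, the profiles $\phi_1,\phi_2$ contain $b$ through $\sqrt{b^2-s^2}/b^2$, so $\phi$, $A_0$, and $p$ are not $b$-independent either. Your proposed substitution $s=b\sin\psi$ does not clear this: one gets $\phi_2=d_1b^{-1}\cos\psi+d_2b\sin\psi+d_3$, which still depends on $b$ unless the integration constants are tuned against powers of $b$. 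In short, the ``real work'' you anticipate is not carried out in the paper; the theorem is asserted rather than proved there, and your proposal is a more honest accounting of what would still need to be verified.
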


\bigskip
The other simple answer to this question arise when numerator of final sentence in  (\ref{A}) is a multiplying
factor of the  denominator. In this case, there is a real constant $\lambda$ such that the following holds
\be
\frac{(b^2-s^2)\phi'''-3s\phi''}{(b^2-s^2)\phi''+\phi-s\phi'}=\lambda.
\ee
Thus we have the following ODE
\be
\phi'''-\big(\lambda+\frac{3s}{b^2-s^2}\big)\phi''+\frac{s\ \lambda }{b^2-s^2}\phi'-
\frac{\lambda }{b^2-s^2}\phi=0.\label{ODE2}
\ee
The solutions of (\ref{ODE2}) are given by following
\be
\phi=c_1s+c_2\sqrt{b^2-s^2}+c_3\sqrt{b^2-s^2}\int \frac{e^{\lambda s}}{(b^2-s^2)^{\frac{3}{2}}} ds,\label{phi}
\ee
where $c_1,c_2,c_3$ are real constants. Therefore,  we have the following.
\begin{thm}
Let $F=\alpha\phi(s)$ are the $(\alpha, \beta)$-metrics  defined by (\ref{phi}). Then the norm of Cartan torsion
of $F$ is independent of $b=||\beta||$.
\end{thm}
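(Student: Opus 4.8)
The plan is to show that the formula \eqref{4} for $\|\mathbf{C}\|$ in terms of $\|\mathbf{I}\|$, which holds for any non-Riemannian $(\alpha,\beta)$-metric by Theorem \ref{MainTHM2}, reduces to an expression free of $b$ whenever $\phi$ is given by \eqref{phi}. The key observation is that the entire $b$-dependence of the coefficient $p(x,y)$ in \eqref{p} is funneled through the quantity $A$ in \eqref{A}, and within $A$ the only term that can depend on $b$ nontrivially is the last fraction $\frac{(b^2-s^2)\phi'''-3s\phi''}{(b^2-s^2)\phi''+\phi-s\phi'}$. So I would first verify that $\phi$ from \eqref{phi} is precisely a solution of the ODE \eqref{ODE2}, which was built so that this last fraction equals the constant $\lambda$ identically in $s$ and $b$.

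Concretely, I would substitute \eqref{phi} into the combination $(b^2-s^2)\phi'''-3s\phi''$ and into $(b^2-s^2)\phi''+\phi-s\phi'$ and check that the ratio collapses to $\lambda$. Differentiating \eqref{phi} three times is the bookkeeping heart of the argument: the terms $c_1 s$ and $c_2\sqrt{b^2-s^2}$ are annihilated by the numerator differential operator $(b^2-s^2)(\cdot)''' - 3s(\cdot)''$ in light of \eqref{ODE} (they are exactly the solutions $\phi_2$ of the homogeneous problem \eqref{ODE} when $d_2,d_3$ are absorbed), so the only surviving contribution comes from the integral term $c_3\sqrt{b^2-s^2}\int e^{\lambda s}(b^2-s^2)^{-3/2}\,ds$. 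For that term the derivatives of the integral produce factors $e^{\lambda s}(b^2-s^2)^{-3/2}$ and its derivative, and one checks by direct but routine differentiation that the numerator and denominator become proportional with ratio exactly $\lambda$.

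Once the last fraction in \eqref{A} is confirmed to equal the constant $\lambda$, the expression \eqref{A} for $A$ becomes
\[
A=(n-2)\frac{s\phi''}{\phi-s\phi'}-(n+1)\frac{\phi'}{\phi}-\lambda,
\]
in which every remaining term is a function of $s$ alone (the functions $\phi,\phi',\phi''$ evaluated at $s$ carry no explicit $b$ beyond the fixed parameter, and after the reduction the residual $\sqrt{b^2-s^2}$ factors cancel between $\phi''$, $\phi$, $\phi'$ in each quotient). Feeding this into \eqref{p} gives $p=p(s)$ independent of $b$, hence $q=1-p$ and the whole coefficient $\sqrt{(3p^2+6pq+(n+1)q^2)/(n+1)}$ in \eqref{4} is a function of $s$ only. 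Since $\|\mathbf{I}\|$ for an $(\alpha,\beta)$-metric is itself controlled by $\phi$ and $s$ in a $b$-independent way after the same cancellation, I conclude that $\|\mathbf{C}\|$ has a bound independent of $b$.

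The main obstacle I anticipate is the differentiation of the integral term in \eqref{phi} and verifying the cancellation cleanly: one must track how $\frac{d}{ds}\sqrt{b^2-s^2}=-s/\sqrt{b^2-s^2}$ interacts with the fundamental-theorem-of-calculus derivative of the integral, so that the explicit $e^{\lambda s}$ and $(b^2-s^2)^{-3/2}$ factors combine to reproduce exactly $\lambda$ times the denominator rather than some $b$-dependent multiple. Establishing that the apparently $b$-laden square-root factors truly cancel in the quotients $s\phi''/(\phi-s\phi')$ and $\phi'/\phi$—and not merely in the last fraction—is the delicate point; everything else is the substitution of a constructed solution into the differential constraint it was designed to satisfy.
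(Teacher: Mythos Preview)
Your overall line matches the paper's exactly: the paper's entire ``proof'' is the construction preceding the statement---one manufactures the ODE \eqref{ODE2} so that the last fraction in \eqref{A} collapses to the constant $\lambda$, records \eqref{phi} as its general solution, and then simply asserts the theorem. You reproduce this key reduction correctly and in more detail than the paper gives.

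The genuine gap is in what you do afterward. You claim that once the last fraction is $\lambda$, the remaining pieces of $A$---namely $(n-2)\,s\phi''/(\phi-s\phi')$ and $(n+1)\,\phi'/\phi$---become functions of $s$ alone because ``the residual $\sqrt{b^2-s^2}$ factors cancel between $\phi'',\phi,\phi'$ in each quotient.'' This cancellation does not happen. The function $\phi$ in \eqref{phi} depends explicitly on $b$ through $\sqrt{b^2-s^2}$ and through the integral $\int e^{\lambda s}(b^2-s^2)^{-3/2}\,ds$, and so do all its derivatives. Already in the degenerate case $c_3=0$, i.e.\ $\phi=c_1 s+c_2\sqrt{b^2-s^2}$, one finds $\phi'/\phi\big|_{s=0}=c_1/(c_2 b)$, which is manifestly $b$-dependent; the full case is no better. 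The same objection applies to your assertion that $\|\mathbf{I}\|$ is $b$-independent ``after the same cancellation.'' You rightly flag this as ``the delicate point,'' but it is not merely delicate---the asserted cancellation is false. The paper does not confront this issue either; it simply states the theorem after the construction. So your proposal is faithful to the paper's strategy and to its level of rigor, but the step you identify as needing work actually fails as written, and neither you nor the paper supplies a valid argument bridging from ``last fraction $=\lambda$'' to ``$\|\mathbf{C}\|$ independent of $b$.''
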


\bigskip
\noindent
\textbf{Open Problems.} Some natural question arises as following:\\
(I) How large is the subclass of  $(\alpha, \beta)$-metrics which their norm of Cartan torsion are
independent of $b=||\beta||$?\\
(II) The other question is to find all of $(\alpha, \beta)$-metrics with bounded Cartan torsion.


\bigskip

\noindent
Akbar Tayebi and Hassan Sadeghi\\
Department of Mathematics, Faculty  of Science\\
University of Qom\\
Qom. Iran\\
Email:\ akbar.tayebi@gmail.com\\
Email:\ sadeghihassan64@gmail.com
\bigskip

\noindent
Esmaeil Peyghan\\
Department of Mathematics, Faculty  of Science\\
Arak University\\
Arak 38156-8-8349, Iran\\
Email: epeyghan@gmail.com

\end{document}